\theoremstyle{plain}
\newtheorem{thm}{Theorem}
\newtheorem{theorem}{Theorem}
\newtheorem{corollary}[thm]{Corollary}
\newtheorem{lemma}[thm]{Lemma}
\newtheorem{remark}[thm]{Remark}
\newtheorem{proposition}[thm]{Proposition}
\theoremstyle{definition}
\theoremstyle{remark}
\numberwithin{equation}{section}
\renewcommand{\textbf}[1]{\begingroup\bfseries\mathversion{bold}#1\endgroup} 
\newcommand{\N}{\mathbb{N}}
\renewcommand{\d}{\mathrm{d}}
\renewcommand{\div}{\operatorname{div}}
\title{Counter-example to a Kröger type spectral inequality}
\author{Salam Kouzayha \and Luc Pétiard}
\date{}
\title{Eigenvalues on the Laplacian with density}
\begin{document}

\begin{abstract}
Let $(M,g)$ be a compact Riemannian manifold with a boundary of class $\mathscr{C}^{1}$. We are interested in the spectrum of the weighted Laplacian on $M$ with Neumann boundary conditions. More precisely, given $\rho$ and $\sigma$ two positive functions on $M$, we study the eigenvalues of the equation $-\div(\sigma \nabla u)=\lambda\rho u$. Inspired by a recent work of B. Colbois and A. El Soufi \cite{colbois_soufi_2019}, we investigate upper bounds for the eigenvalues in the case where $\sigma=\rho^{\alpha}$, $\alpha>0$. We show that $\alpha = \frac{n-2}{n}$ plays a critical role in the estimation of the spectrum when the total mass of $\rho$ is fixed.
\end{abstract}

\maketitle

\section{Introduction}\ %

Let $(M,g)$ be a compact Riemannian manifold of dimension $n\geqslant2$ with a boundary of class $\mathscr{C}^{1}$. Let $\rho$ and $\sigma$ be two positive continuous functions defined on $M$. For all $u\in H^{1}(M)$, we denote by $\nabla u$ the gradient of $u$ with respect to the metric $g$ and we consider the Rayleigh quotient
\begin{equation}
\label{variationnel}
R_{(g,\sigma,\rho)}(u)=\frac{\int_{M}|\nabla u|^{2}\sigma \d V_{g}}{\int_{M}u^{2}\rho \d V_{g}}.
\end{equation}
 Its corresponding eigenvalues are given, for $k \in \N$, by
\[
\lambda_{k}^{g}(\rho,\sigma)=\inf\limits_{E_{k+1}\subset H^1(M)}\sup_{u\in E_{k+1}\setminus \{0\}}R_{(g,\sigma,\rho)}(u),
\]
where $E_{k+1}$ runs through the $(k+1)$-dimensional vector subspaces of $H^{1}(M)$ and $\d V_{g}$ is the volume element induced by the metric $g$.\\
Under some regularity conditions on $\rho$ and $\sigma$, $\lambda^{g}_{k}(\rho,\sigma)$ is the $k$-th eigenvalue of the problem
\begin{equation}
\label{luc3}
-\div(\sigma \nabla u)=\lambda\rho u\quad \text{in } M,
\end{equation}
with Neumann conditions on the boundary. 
When there is no risk of confusion, we use $\lambda_{k}(\rho,\sigma)$ instead of $\lambda_{k}^{g}(\rho,\sigma)$.
We said in the introduction that the spectrum of \eqref{luc3} is discrete, and can be ordered in a positive nondecreasing sequence that tends to infinity. Also, $\lambda_{0}(\rho,\sigma)=0$, the constant functions being eigenfunctions for $\lambda_{0}$. Although we can't find the eigenvalues explicitly in general, we can estimate them when we fix the total mass of $\rho$ to get some interesting inequalities.


What we do here is a continuation of several works that aimed to find a good choice of geometric restriction such that the supremum of the eigenvalues is bounded from above. The conformal spectrum is an important one and has been widely studied. Indeed, let us introduce the quantity
\begin{equation*}
\lambda_k^c(M,[g]) = \underset{g'\in [g]}{\sup} \lambda_k(M,g'),
\end{equation*}
then an upper bound on this quantity was found by Korevaar \cite[Theorem 0.4]{korevaar_1993}. A lower bound was later found by B.~Colbois, A.~El~Soufi in \cite[Corollary 1]{colbois_soufi_2003}. Together, these results read:
\begin{equation}
\label{encadrement_conforme}
n\omega_n^{\frac{2}{n}}k^{\frac{2}{n}}
\leqslant \lambda_k^c(M,[g])
\leqslant C([g])k^{\frac{2}{n}}
\end{equation}
where $\omega_n$ is the volume of the unit ball in dimension $n$ and $C([g])$ is a constant depending only on $n$ and on the conformal class of $g$. The lower bound is actually a corollary of an interesting result concerning the gap between two extremal eigenvalues, which states that
\begin{equation*}
\lambda_{k+1}^c(M,[g])-\lambda_k^c(M,[g]) \geqslant n^{\frac{n}{2}}\omega_n.
\end{equation*} 
 
As the problem of the Laplacian with densities is very general, we chose to restrict ourselves to some particular class of densities. In this chapter, we are interested in the problem \eqref{luc3} when $\sigma=\rho^{\alpha}$, and $\alpha\in [0,1]$. The results we obtain on the spectrum $\lambda_{k}^g(\rho,\rho^{\alpha})$ are supported by the three following important theorems, that led our motivation and intuition: 
\begin{itemize}
\item
When $\alpha = 0$, A.~El~Soufi and B.~Colbois proved in \cite[Corollary 4.1]{colbois_soufi_2019} that, for any compact Riemannian manifold $(M,g_{0})$ with density $\rho$, and for any metric $g$ conformal to $g_{0}$ such that $\int_{M} \rho \d V_{g}=|M|_{g}$, one has
\[
	\lambda_{k}(\rho,1)|M|_{g}^{\frac{2}{n}}\leqslant C_{n}k^{\frac{2}{n}}+D_{n}|M|_{g_{0}}^{\frac{2}{n}},
\]
where $C_n$ and $D_n$ are constants depending only on $n$.
\item
Another important problem is when $\alpha = 1$, that is, $\sigma=\rho$. The associated operator is called the Witten Laplacian and was treated by B.~Colbois, A.~El~Soufi and A.~Savo. In their work \cite[Theorem 5.2]{colbois_soufi_savo_2015}, they proved that, contrary to the previous cases, one cannot bound the eigenvalues from above on all manifolds. Indeed, on a compact manifold of revolution endowed with the Gaussian radial density $\rho_{m}=e^{-m|x|^{2}}$, one has for $m$ large enough,
\[
\lambda_{1}(\rho_{m},\rho_{m})\geqslant m.
\]
\item A further important case is when $\alpha = \frac{n-2}{n}$, and actually comes from several works on the conformal spectrum. We get an upper bound thanks to the work of A.~Hassannezhad \cite[Theorem 1.1]{hassannezhad_2011}, where she finds an inequality for the classical Laplace equation:
\[
\lambda_{k}^g(1,1)|M|_{g}^{\frac{2}{n}}\leqslant A_{n}k^{\frac{2}{n}}+B_{n}V([g])^{\frac{2}{n}}
\]
where $A_{n}$ and $B_{n}$ are two constants which only depend on $n$, and $V\left([g]\right)$ is the geometric quantity defined by:
\[
V([g])=\inf\left\{|M|_{g'}:g'\mbox{ is conformal to } g \mbox{ and } \text{Ricci}(g')\geqslant-(n-1)g' \right\}.
\]
If we now take $\rho$ a positive continuous function on $M$ satisfying $\int_{M}\rho\d V_g = |M|_g$, one can easily check that $|M|_{\rho^{\frac{2}{n}}g} = \int_{M}\rho\d V_g = |M|_g$ and $\lambda_{k}^{\rho^{\frac{2}{n}}g}(1,1)=\lambda_{k}^{g}(\rho,\rho^{\frac{n-2}{n}})$.
Since $g$ and $\rho^{\frac{2}{n}}g$ are conformal, then $V([\rho^{\frac{2}{n}}g]) = V([g])$.
Thus we obtain
\[
\lambda_{k}^{g}\left(\rho,\rho^{\frac{n-2}{n}}\right)|M|_{g}^{\frac{2}{n}}
=\lambda_{k}^{\rho^{\frac{2}{n}}g}(1,1)|M|_{\rho^{\frac{2}{n}}g}^{\frac{2}{n}}
\leqslant A_{n}k^{\frac{2}{n}}+B_{n}V([g])^{\frac{2}{n}},
\]
which gives us an upper bound for $\lambda_{k}^{g}\left(\rho,\rho^{\alpha}\right)$ in the case where $\alpha$ is equal to $\frac{n-2}{n}\cdot$
\end{itemize}



For the following we will denote by $\lambda_{k,\alpha}^{*}(M,g)$ the supremum of $\lambda_{k}^g(\rho,\rho^{\alpha})$ on the set of all densities $\rho$ satisfying $\int_{M}\rho \d V_{g}=|M|_{g}$, that is,
\begin{equation*}
\lambda^{*}_{k,\alpha}(M,g)=\underset{\rho}{\sup}\left\{\lambda_{k}^{g}(\rho,\rho^{\alpha}),\int_{M}\rho \d V_{g}=|M|_{g}\right\}.
\end{equation*}
We believe that a uniform lower bound of the same type as in equation \eqref{encadrement_conforme} does not exist for this supremum if $\alpha \in \left[0,\frac{n-2}{n}\right]$. Indeed, when $\alpha = 0$, we can always find a $1$-parameter family of metrics of volume $1$ making it very small. The construction of these metrics can be found in \cite[Theorem 5.1]{colbois_soufi_2019}, and should be generalised for $\alpha \in \left(0,\frac{n-2}{n}\right]$. More generally, it is still an open question to know if there exists a non-negative bound for the spectral gap with densities, that is:
\begin{equation*}
\lambda^{*}_{k+1,\alpha}(M,g)-\lambda^{*}_{k,\alpha}(M,g) > K(g)
\end{equation*}
where $K(g)\geqslant 0$ is a constant depending only on the metric.

An interesting question is to study the behaviour of the spectrum for different values of $\alpha$. In fact, through the theorems \ref{thm_borne_sup} and \ref{thm_sup_infini}, we will see that $\alpha = \frac{n-2}{n}$ plays a significant role.
To highlight this, we first show $\lambda^{*}_{k,\alpha}(M,g)$ is bounded when $\alpha$ runs over the interval $\left(0,\frac{n-2}{n}\right)$:

\begin{theorem}
\label{thm_borne_sup}
Let $n \geqslant 2$. Then there exist $A_n$, $B_n$ two positive constants such that, for any bounded domain $M$ with boundary of class $\mathscr{C}^{1}$ of a complete Riemannian manifold $(\tilde{M},\tilde{g}_{0})$ of dimension $n$, verifying $\text{Ricci}(\tilde{g}_{0})\geqslant -(n-1)\tilde{g}_{0}$, we have, for every metric $g$ conformal to $g_{0}:=\left(\tilde{g}_{0}\right)_{|_{M}}$, for all $\alpha \in \left(0,\frac{n-2}{n}\right)$, and every density $\rho$ such that $\int_{M}\rho \d V_{g}=|M|_{g}$:
\begin{equation*}
	\lambda_{k}(\rho,\rho^{\alpha})|M|_{g}^{\frac{2}{n}}\leqslant A_{n}k^{\frac{2}{n}}+B_{n}|M|_{g_{0}}^{\frac{2}{n}}.
\end{equation*}
\end{theorem}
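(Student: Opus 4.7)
The strategy is to reduce the general exponent $\alpha\in(0,\tfrac{n-2}{n})$ to the quoted Colbois--El~Soufi bound (the $\alpha=0$ case) via a single conformal change of metric together with a rescaling of the density. Given $(g,\rho,\alpha)$, I would set
\[
g' := \rho^{\frac{2\alpha}{n-2}}\, g, \qquad \tilde\rho := \rho^{\frac{n-2-n\alpha}{n-2}}.
\]
For $\alpha<\tfrac{n-2}{n}$ the exponent defining $\tilde\rho$ is positive, so $\tilde\rho$ is a genuine positive density; the case $n=2$ is vacuous (the range of $\alpha$ is then empty), so we may assume $n\geqslant 3$. A direct substitution using $|\nabla u|_{g'}^{2}=\rho^{-2\alpha/(n-2)}|\nabla u|_{g}^{2}$ and $\d V_{g'}=\rho^{n\alpha/(n-2)}\d V_{g}$ makes $R_{(g',1,\tilde\rho)}(u)$ and $R_{(g,\rho^{\alpha},\rho)}(u)$ coincide identically in $u$, so that $\lambda_{k}^{g'}(\tilde\rho,1)=\lambda_{k}^{g}(\rho,\rho^{\alpha})$.

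The next step is to enforce the normalization required to apply Colbois--El~Soufi. One checks $\int_{M}\tilde\rho\,\d V_{g'}=\int_{M}\rho\,\d V_{g}=|M|_{g}$ by hypothesis, whereas $|M|_{g'}=\int_{M}\rho^{n\alpha/(n-2)}\,\d V_{g}$. Since $n\alpha/(n-2)\in(0,1)$, Jensen's inequality applied to the concave function $t\mapsto t^{n\alpha/(n-2)}$ yields $|M|_{g'}\leqslant|M|_{g}$, so the two masses differ in general. I would compensate by replacing $\tilde\rho$ with $\bar\rho:=(|M|_{g'}/|M|_{g})\,\tilde\rho$, which satisfies $\int_{M}\bar\rho\,\d V_{g'}=|M|_{g'}$ and $\lambda_{k}^{g'}(\bar\rho,1)=\tfrac{|M|_{g}}{|M|_{g'}}\lambda_{k}^{g}(\rho,\rho^{\alpha})$.

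Since $g'$ is conformal to $g_{0}$ (because $g$ is), the Colbois--El~Soufi bound quoted in the introduction applied to $(M,g_{0})$ with metric $g'$ and density $\bar\rho$ reads
\[
\lambda_{k}^{g'}(\bar\rho,1)\,|M|_{g'}^{2/n}\leqslant C_{n}k^{2/n}+D_{n}|M|_{g_{0}}^{2/n}.
\]
Substituting produces $\lambda_{k}^{g}(\rho,\rho^{\alpha})\,|M|_{g}\,|M|_{g'}^{(2-n)/n}\leqslant C_{n}k^{2/n}+D_{n}|M|_{g_{0}}^{2/n}$. For $n\geqslant 3$ the exponent $(2-n)/n$ is negative, so $|M|_{g'}\leqslant|M|_{g}$ gives $|M|_{g'}^{(2-n)/n}\geqslant|M|_{g}^{(2-n)/n}$, and the left-hand side dominates $\lambda_{k}^{g}(\rho,\rho^{\alpha})\,|M|_{g}^{2/n}$. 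Setting $A_{n}:=C_{n}$ and $B_{n}:=D_{n}$ finishes the proof.

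The main obstacle is identifying the pair of exponents that makes the two Rayleigh quotients match \emph{identically}---this is the key algebraic input, dictated by the requirements $\phi^{(n-2)/2}=\rho^{\alpha}$ and $\tilde\rho\,\phi^{n/2}=\rho$ with $\phi=\rho^{2\alpha/(n-2)}$. Once this is found, the remaining ingredients (Jensen for the volume comparison, a scalar rescaling of the density to recover the mass condition, and the sign of $(2-n)/n$ in dimension $n\geqslant 3$) fit together mechanically. It is worth observing that the Ricci lower bound on $\tilde g_{0}$ plays no role in this approach; it appears in the statement only to match the hypotheses of the other theorems cited in the introduction.
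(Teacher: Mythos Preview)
Your argument is correct. The conformal substitution $g'=\rho^{2\alpha/(n-2)}g$, $\tilde\rho=\rho^{(n-2-n\alpha)/(n-2)}$ does give $R_{(g',1,\tilde\rho)}=R_{(g,\rho^{\alpha},\rho)}$ identically; Jensen's inequality on $t\mapsto t^{n\alpha/(n-2)}$ (concave because $\alpha<\tfrac{n-2}{n}$) gives $|M|_{g'}\leqslant|M|_{g}$; and the scalar rescaling together with the sign of $(2-n)/n$ closes the argument cleanly. The case $n=2$ is indeed vacuous.

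Your route is genuinely different from the paper's. The paper does not reduce to the known $\alpha=0$ theorem at all: it reproves the inequality from scratch via the Hassannezhad covering lemma (producing $4(k+1)$ disjoint capacitors), constructs explicit plateau test functions on annuli and on $r_0$-neighbourhoods, bounds their energy through H\"older's inequality combined with conformal invariance of the $n$-energy and Bishop--Gromov volume comparison (this is where the Ricci lower bound enters), and finally invokes a pigeonhole lemma for three measures to select $k+1$ capacitors with controlled $g$-volume, $g_0$-volume, and $\mu$-mass simultaneously. Your reduction is considerably shorter and shows transparently why the range $\alpha\in(0,\tfrac{n-2}{n})$ is exactly the one on which the $\alpha=0$ bound propagates (concavity of $t^{n\alpha/(n-2)}$ fails beyond it). The paper's direct approach, on the other hand, is self-contained and makes the geometric mechanism explicit; it also clarifies that the same constants $A_n,B_n$ arise from the covering/comparison machinery rather than being inherited opaquely from the $\alpha=0$ case. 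One caveat on your closing remark: whether the Ricci hypothesis is truly dispensable depends on the precise hypotheses of the Colbois--El~Soufi $\alpha=0$ result you invoke; in the paper's own proof it is used essentially, via Bishop--Gromov, so the claim that it ``plays no role'' should be attributed to the black box rather than asserted outright.
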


\begin{remark}\ %

Let $g$ be a metric defined on $M$ such that $g_0 = \text{ric}_0 g$, for some $\text{ric}_0 > 0$. Then if $\text{Ricci}\left(g_0\right) \geqslant -(n-1)g_0$, we have $\text{Ricci}(g) \geqslant -(n-1)\text{ric}_0 g$ and $|M|_{g_0} = \text{ric}_0^{\frac{n}{2}}|M|_g$. The following corollary follows:
\end{remark}

\begin{corollary}\ %
\label{corollaire_thm_borne_sup}

Let $M$ be a bounded domain with boundary of class $\mathscr{C}^{1}$ of a complete Riemannian manifold $(\tilde{M},\tilde{g})$ of dimension $n\geqslant 2$ such that $\text{Ricci}(\tilde{g})\geqslant -(n-1)\text{ric}_0 \tilde{g}$ and let $g=\tilde{g}_{|_{M}}$. For every density $\rho$ such that $\int_{M}\rho \d V_{g}=|M|_{g}$, we have:

\begin{equation*}
	\lambda_{k}(\rho,\rho^{\alpha})|M|_{g}^{\frac{2}{n}}\leqslant A_{n}k^{\frac{2}{n}}+B_{n}\text{ric}_0|M|_{g}^{\frac{2}{n}}
\end{equation*}
where $\alpha\in \left(0,\frac{n-2}{n}\right)$ and $ric_0 > 0$.

\end{corollary}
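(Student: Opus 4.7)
The plan is to reduce Corollary \ref{corollaire_thm_borne_sup} to Theorem \ref{thm_borne_sup} by a constant conformal rescaling on the ambient manifold, exactly along the lines indicated in the preceding remark. The only hypothesis of the theorem that is not directly available is the normalisation of the Ricci lower bound to $-(n-1)\tilde{g}_0$, so the whole point is to absorb the factor $\text{ric}_0$ by rescaling the metric.

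First I would set $\tilde{g}_0 := \text{ric}_0\,\tilde{g}$ on $\tilde{M}$. Since the Ricci tensor is invariant under multiplication of the metric by a positive constant,
\[
\text{Ricci}(\tilde{g}_0)=\text{Ricci}(\tilde{g})\geqslant -(n-1)\text{ric}_0\,\tilde{g}=-(n-1)\tilde{g}_0,
\]
so $(\tilde{M},\tilde{g}_0)$ satisfies the hypothesis of Theorem \ref{thm_borne_sup}. Restricting to $M$, I would set $g_0:=(\tilde{g}_0)_{|_M}=\text{ric}_0\, g$. The metric $g$ of the corollary is then trivially conformal to $g_0$ (they are proportional by the constant $\text{ric}_0$), and the density $\rho$ still satisfies the required normalisation $\int_M \rho\,\d V_g=|M|_g$ because this condition is stated with respect to $g$, which has not been altered.

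Applying Theorem \ref{thm_borne_sup} with these choices yields
\[
\lambda_k(\rho,\rho^\alpha)\,|M|_g^{\frac{2}{n}}\leqslant A_n k^{\frac{2}{n}}+B_n\,|M|_{g_0}^{\frac{2}{n}}.
\]
It then remains to compute how the volume term transforms under the constant rescaling. Since $g_0=\text{ric}_0 g$ in dimension $n$, the volume element scales as $\d V_{g_0}=\text{ric}_0^{n/2}\,\d V_g$, whence $|M|_{g_0}^{2/n}=\text{ric}_0\,|M|_g^{2/n}$. Substituting gives the claimed inequality with the same constants $A_n,B_n$ as in the theorem. There is no real obstacle here; the only point to verify carefully is that the Ricci hypothesis transforms as expected under a constant rescaling and that the mass normalisation of $\rho$ is preserved, both of which are immediate.
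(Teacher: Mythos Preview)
Your proposal is correct and follows exactly the approach indicated by the remark preceding the corollary in the paper: rescale the ambient metric by the constant $\text{ric}_0$ so that the Ricci lower bound is normalised, apply Theorem~\ref{thm_borne_sup}, and then convert $|M|_{g_0}^{2/n}$ back to $\text{ric}_0\,|M|_g^{2/n}$ via the volume scaling $|M|_{g_0}=\text{ric}_0^{n/2}|M|_g$. There is no substantive difference between your argument and the paper's.
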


In the last section we show that $\lambda^{*}_{k,\alpha}(M,g)$ is infinite when $\alpha$ belongs to $\left(\frac{n-2}{n},1\right)$, and $(M,g)$ is a manifold of revolution.

\begin{remark}\ %

A natural question emerges: what happens when $\alpha > 1$? We believe the supremum
is not uniformly bounded and can be infinite for some manifolds. However, our attempts
in that direction remained fruitless.

\end{remark}

%
%

%
%

\section{Bounding the eigenvalues from above}\ %

In this section, we suppose that  $\alpha \in \left(0,\frac{n-2}{n}\right)$.
We define $M$ as a bounded submanifold of dimension $n$ of a complete Riemannian manifold $(\tilde{M},\tilde{g}_{0})$ with Ricci curvature bounded from below and $\rho$ as a positive continuous function on $M$. Using the same argument of A. El Soufi and B. Colbois in \cite[Theorem 4.1]{colbois_soufi_2019}, we are able to maximise the eigenvalues $\lambda_k(\rho,\rho^{\alpha})$ on the class of metrics conformal to $g_{0}=\tilde{g_{0}}_{|_{M}}$ under the preservation of the total mass of $\rho$.

\bigskip
The following lemma was first introduced by Asma Hassannezhad in \cite[Theorem 2.1]{hassannezhad_2011}.
It is based on two important technical results of Grigor'yan, Netrusov, Yau (see \cite[section 3]{grigor}) and Colbois, Maerten \cite[Corollary 2.3]{colbois_maerten_2008}.
We say that a metric measured space $(X,d,\mu)$ satisfies the $(2,N;1)$-covering property if every ball of radius $r\leqslant 1$ can be covered by $N$ balls of radius $\frac{r}{2}$.

\begin{lemma}
\label{coveringlemma_intro}
Let $(X,d,\mu)$ be a complete and locally compact metric measured space, where $\mu$ is a non-atomic measure. Suppose $X$ satisfies the $(2,N;1)$-covering property. Then for every $k\in\mathbb{N}^{*}$, there exists a family of $4(k+1)$ measurable sets 
$F_j,G_j$ with the following properties:
  \begin{enumerate}
    \item $F_{j}\subset G_{j}$.
    \item The $G_{j}$'s are mutually disjoint.
		\item $\mu(F_{j})\geqslant \frac{\mu(X)}{c^{2}(k+1)}$ with $c=c(N)$ a constant which depends only on $N$.
		\medskip
    \item The $(F_{j},G_{j})$'s are of the following two types:
		 \begin{itemize}
		 	\item For all $j$, $F_{j}$ is an annulus $A=\{r\leqslant d(x,a)<R\}$,
		 	
		 	and $G_{j}=2A=\left\{\frac{r}{2}\leqslant d(x,a)<2R\right\}$ with $0\leqslant r \leqslant R$ and $2R<1$, or:
		 	\item For all $j$, $F_{j}$ is an open set $F\subset M,$
		 	
		 	and $G_{j}=F^{r_{0}}=\{x\in M;d(x,F)\leqslant r_{0}\}$ with $r_{0}=\frac{1}{1600}$.
		 \end{itemize}
     \end{enumerate}
\end{lemma}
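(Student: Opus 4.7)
The strategy is to run a dichotomy: one of two classical decomposition procedures will produce the required family, depending on how the measure $\mu$ distributes over $X$. The plan is to combine, in the spirit of Hassannezhad, the annular construction of Grigor'yan--Netrusov--Yau with the capacitor construction of Colbois--Maerten, using the $(2,N;1)$-covering hypothesis as the doubling condition that makes both tools applicable up to scale $1$.

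First, I would attempt the GNY procedure at scales $r<1$. Starting from a ball and invoking their Proposition in Section~3 of \cite{grigor}, the algorithm either extracts an annulus $A=\{r\leqslant d(\cdot,a)<R\}$ with $2R<1$ that captures a controlled fraction of the local mass and whose double $2A$ is disjoint from the rest of the construction, or recursively splits the current ball into finitely many sub-balls whose number is bounded by the covering constant $N$. Iterating this dichotomy until one has extracted $4(k+1)$ annuli gives a family of the first type listed in the statement, with each $F_j$ of mass at least $\mu(X)/(c(N)^2(k+1))$.

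The complication is that the GNY iteration may terminate too early when most of the mass of $\mu$ is concentrated at scales comparable to $1$, where the covering/doubling property no longer propagates. In that regime I would fall back on the Colbois--Maerten construction (Corollary 2.3 of \cite{colbois_maerten_2008}): a greedy Vitali-type selection of balls of radius $r_0/2$ produces disjoint measurable sets $F_j$ whose $r_0$-thickenings $G_j = F_j^{r_0}$ remain pairwise disjoint, the value $r_0 = 1/1600$ being chosen precisely so that the overlaps incurred by thickening stay controlled by the covering constant $N$. This yields sets of the second type, again with the uniform mass lower bound.

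The main obstacle, and the part I expect to be most technical, is the bookkeeping that glues the two regimes along the dichotomy: the thresholds triggering ``GNY terminates'' versus ``enough separated clusters exist'' must be calibrated so that in every situation one ends up with at least $4(k+1)$ sets of a single type, rather than a mixture. This is precisely where the factor $4$ in $4(k+1)$ appears: the iteration naturally produces sets in four interleaved sub-families (depending, for the annular case, on which quarter of the current annulus is retained to guarantee that the doubled annulus avoids those already chosen), and only one of the four needs to carry the mass estimate. I would quote Hassannezhad's Theorem 2.1 essentially verbatim for the combinatorial accounting, treating the GNY decomposition and the Colbois--Maerten greedy selection as black boxes, and check that the $(2,N;1)$-covering property provides the doubling input required by both.
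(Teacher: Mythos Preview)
The paper does not actually prove this lemma: it is stated as a quotation of Hassannezhad \cite[Theorem~2.1]{hassannezhad_2011}, with the remark that it rests on the GNY annulus decomposition and the Colbois--Maerten capacitor construction, and no argument is given in the body of the paper. So there is no ``paper's own proof'' to compare your proposal against; your sketch is in fact a fair outline of what one finds in the cited sources, and the dichotomy you describe (GNY at small scales, Colbois--Maerten when mass sits at scale $\sim 1$) is exactly the mechanism behind Hassannezhad's result.

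One point to correct in your write-up: your explanation of the factor $4$ in $4(k+1)$ is not right. It is not that the annular iteration produces four interleaved sub-families only one of which carries the mass bound. The underlying constructions (GNY and Colbois--Maerten) already give, for any target $K$, a family of $K$ pairs $(F_j,G_j)$ with the stated mass lower bound $\mu(F_j)\geqslant \mu(X)/(c^2 K)$; one simply runs them with $K=4(k+1)$. The reason the paper needs $4(k+1)$ sets rather than $k+1$ is downstream: in the proof of Theorem~\ref{thm_borne_sup} one has to select, via Lemma~\ref{lemme_3_mesures}, $k+1$ of the $G_j$'s that are simultaneously small for three different measures, and that pigeonhole step costs a factor of roughly $4$. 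If you intend to supply a proof, you can drop the ``four interleaved sub-families'' paragraph and simply invoke the cited results with parameter $4(k+1)$.
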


\begin{proof}[Proof of Theorem \ref{thm_borne_sup}]

To prove Theorem \ref{thm_borne_sup}, we construct $k+1$ test functions on $M$ with disjoint supports and controlled Rayleigh quotients.
	The idea is to use the covering property that was applied by A. Hassannezhad in \cite[Theorem 2.1]{hassannezhad_2011} to find $k+1$ functions defined on $M$ with disjoint supports and controlled Rayleigh quotients. Let $\mu$ be the measure defined by its volume element $\mu=\rho \d V_{g}$. Since $\text{Ric}(\tilde{g}_0)\geqslant -(n-1)\tilde{g}_0$, the metric measured space $(\tilde{M},\tilde{d}_0,\mu)$ satisfies the (2;$N$;1)-covering property for some fixed $N$ (see \cite{hassannezhad_2011}), and we can apply Lemma \ref{coveringlemma_intro}.
%
%
Define the distance $d_{0}$ as the restriction on $M$ of the distance $\tilde{d}_0$ induced by $\tilde{g}_0$. We are going to treat two cases separately:

\medskip
	\textbf{First case:}
	$F_{j}$ is a generic annulus $A$ and $G_j = 2A$.
	\begin{center}
		\def\svgscale{0.8}
\begingroup%
  \makeatletter%
  \providecommand\color[2][]{%
    \errmessage{(Inkscape) Color is used for the text in Inkscape, but the package 'color.sty' is not loaded}%
    \renewcommand\color[2][]{}%
  }%
  \providecommand\transparent[1]{%
    \errmessage{(Inkscape) Transparency is used (non-zero) for the text in Inkscape, but the package 'transparent.sty' is not loaded}%
    \renewcommand\transparent[1]{}%
  }%
  \providecommand\rotatebox[2]{#2}%
  \ifx\svgwidth\undefined%
    \setlength{\unitlength}{512.06469193bp}%
    \ifx\svgscale\undefined%
      \relax%
    \else%
      \setlength{\unitlength}{\unitlength * \real{\svgscale}}%
    \fi%
  \else%
    \setlength{\unitlength}{\svgwidth}%
  \fi%
  \global\let\svgwidth\undefined%
  \global\let\svgscale\undefined%
  \makeatother%
  \begin{picture}(1,0.40566455)%
    \put(0,0){\includegraphics[width=\unitlength,page=1]{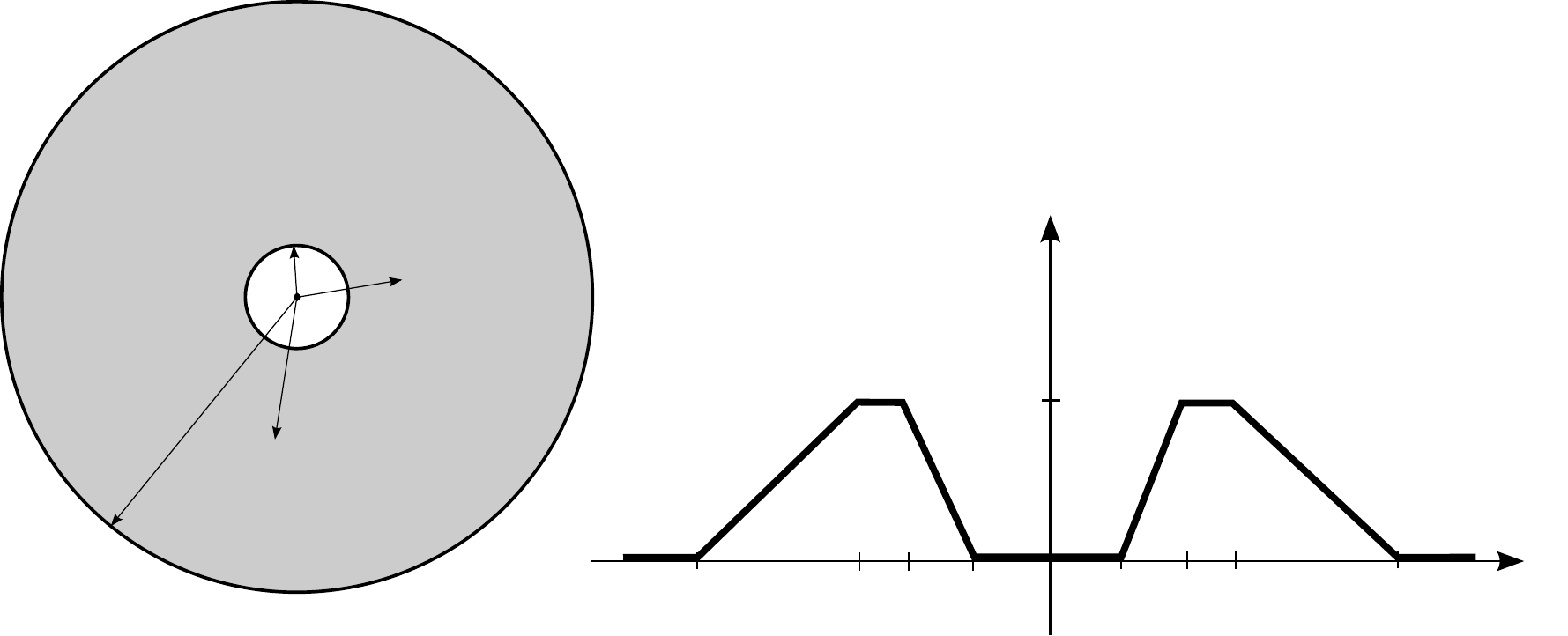}}%
    \put(0.23484482,0.21811271){\color[rgb]{0,0,0}\makebox(0,0)[lt]{\begin{minipage}{0.01732754\unitlength}\raggedright $r$\end{minipage}}}%
    \put(0,0){\includegraphics[width=\unitlength,page=2]{img_anneaux_bis.pdf}}%
    \put(0.17328071,0.22951225){\color[rgb]{0,0,0}\makebox(0,0)[lb]{\smash{$_{\frac{r}{2}}$}}}%
    \put(0.70570484,0.01436765){\color[rgb]{0,0,0}\makebox(0,0)[lb]{\smash{$\frac{r}{2}$}}}%
    \put(0.93750762,0.0596899){\color[rgb]{0,0,0}\makebox(0,0)[lb]{\smash{$d_{0}(x,a)$}}}%
    \put(0.9214762,0.01760093){\color[rgb]{0,0,0}\makebox(0,0)[lb]{\smash{$1$}}}%
    \put(0,0){\includegraphics[width=\unitlength,page=3]{img_anneaux_bis.pdf}}%
    \put(0.87574926,0.01761098){\color[rgb]{0,0,0}\makebox(0,0)[lb]{\smash{$2R$}}}%
    \put(0.75101476,0.02538925){\color[rgb]{0,0,0}\makebox(0,0)[lb]{\smash{$r$}}}%
    \put(0.18357644,0.16087648){\color[rgb]{0,0,0}\makebox(0,0)[lb]{\smash{$R$}}}%
    \put(0.15786632,0.34368758){\color[rgb]{0,0,0}\makebox(0,0)[lb]{\smash{$2A$}}}%
    \put(0.13357548,0.27389894){\color[rgb]{0,0,0}\makebox(0,0)[lb]{\smash{$A$}}}%
    \put(0.11610827,0.1044561){\color[rgb]{0,0,0}\makebox(0,0)[lb]{\smash{$2R$}}}%
    \put(0.7797166,0.01956304){\color[rgb]{0,0,0}\makebox(0,0)[lb]{\smash{$R$}}}%
    \put(0.65158436,0.14336281){\color[rgb]{0,0,0}\makebox(0,0)[lb]{\smash{$1$}}}%
  \end{picture}%
\endgroup%

		\captionof{figure}{Behaviour of $u_{A}$}
	\end{center}
	Define the function $u_{A}$ supported in $G_{j}=2A$ by:
	\[
	u_{A}(x) = \left\{
	\begin{array}{lll}
	\frac{2}{r}d_{0}(x,a)-1\quad&\mbox{if }& \frac{r}{2}<d_{0}(x,a)<r \\
	1\quad&\mbox{if }& r<d_{0}(x,a)<R\\
	2-\frac{1}{R}d_{0}(x,a)\quad&\mbox{if }& R<d_{0}(x,a)<2R.
	\end{array}
	\right.
	\]
	Since $u=1$ on $A$, then we have 
	\begin{equation}
	\label{luc10}
\int_{M}u_{A}^{2}\rho \d V_{g}\geqslant \int_{A}u_{A}^{2}\rho \d V_{g}= \int_{A}\rho \d V_{g}=\mu(A)\geqslant \frac{\mu(M)}{c^{2}(k+1)}.
\end{equation}
On the other hand, using Hölder's inequality repeatedly on the integral $\int_{M}|\nabla^{g} u_{A}|^{2}\rho^{\alpha}\d V_{g}$ and the fact that the generalised Dirichlet energy $\int_{2A}|\nabla^{g} u_{A}|^{n}\d V_{g}$ is a conformal invariant, we get
	\begin{align*}
\int_{M}|\nabla^{g} u_{A}|^{2}\rho^{\alpha}\d V_{g}=\int_{2A}|\nabla^{g} u_{A}|^{2}\rho^{\alpha}\d V_{g}
	&\leqslant \left(\int_{2A}|\nabla^{g} u_{A}|^{n}\d V_{g}\right)^{\frac{2}{n}}\left(\int_{2A}\rho^{\frac{n\alpha}{n-2}}\d V_{g}\right)^{\frac{n-2}{n}} \\
	&\leqslant \left(\int_{2A}|\nabla^{g} u_{A}|^{n}\d V_{g}\right)^{\frac{2}{n}} \left(\int_{2A}\rho \d V_{g}\right)^{\alpha}\left(\int_{2A}1\d V_{g}\right)^{\frac{n-2}{n}-\alpha}\\
	&= \left(\int_{2A}|\nabla^{g_{0}} u_{A}|^{n} \d V_{g_{0}}\right)^{\frac{2}{n}} \mu(2A)^{\alpha}|2A|_{g}^{\frac{n-2}{n}-\alpha}.
	\end{align*}
	Since 
	\[
	|\nabla^{g_{0}}u_{A}| = \left\{
	\begin{array}{lll}
	\frac{2}{r}\quad&\mbox{if }& \frac{r}{2}<d_{0}(x,a)<r \\
	0\quad&\mbox{if }& r<d_{0}(x,a)<R\\
	\frac{1}{R}\quad&\mbox{if }& R<d_{0}(x,a)<2R,
	\end{array}
	\right.
	\] 
we obtain:

\begin{equation}
\label{ineg_volume_boules}
\int_{2A}|\nabla^{g_{0}} u_{A}|^{n}\d V_{g_{0}}
\leqslant \left(\frac{2}{r}\right)^{n}|B(a,r)|_{g_{0}}+\left(\frac{1}{R}\right)^{n}\left|B(a,2R)\right|_{g_{0}}.
\end{equation}

But $r\leqslant 2R \leqslant 1$ and $\text{Ric}(g_{0})\geqslant -(n-1)g_{0}$. It is also a well known fact that thanks to the Bishop-Gromov comparison Theorem, one can compare the volume of any ball in $(M,g_0)$ to the volume of the ball of same radius in the hyperbolic space of constant curvature $-1$.
More information can be found in \cite[Theorem 3.1]{zhu_1997}.
We deduce that the right-hand side of inequality \eqref{ineg_volume_boules} is bounded from above by a quantity $\tilde{A}_n$ depending only on $n$.
Hence the following inequality holds:
	\begin{equation}
	\label{luc11}
	\int_{M}|\nabla^{g} u_{A}|^{2}\rho^{\alpha}\d V_{g}\leqslant \left(\tilde{A}_n\right)^{\frac{2}{n}}\mu(2A)^{\alpha}|2A|_{g}^{\frac{n-2}{n}-\alpha}.
	\end{equation}
	
	From \eqref{luc10} and \eqref{luc11}, we deduce that the Rayleigh quotient is bounded as follows:
	\begin{equation}
	\label{luc7}
	R_{(g,\rho,\rho^{\alpha})}(u_{A})=\frac{\int_{M}|\nabla^{g} u_{A}|^{2}\rho^{\alpha}\d V_{g}}{\int_{M}u_{A}^{2}\rho \d V_{g}}\leqslant \left(\tilde{A}_n\right)^{\frac{2}{n}}c^2\frac{\mu(2A)^{\alpha}|2A|_{g}^{\frac{n-2}{n}-\alpha}}{\mu(M)}(k+1).
	\end{equation}

	\medskip
	\textbf{Second case:} $F_{j}$ is a generic subset $V$ of $M$ and $G_{j}= V^{r_{0}}$, the set at distance $r_0$ from $V$.
	\begin{center}
		\def\svgscale{0.25}
\begingroup%
  \makeatletter%
  \providecommand\color[2][]{%
    \errmessage{(Inkscape) Color is used for the text in Inkscape, but the package 'color.sty' is not loaded}%
    \renewcommand\color[2][]{}%
  }%
  \providecommand\transparent[1]{%
    \errmessage{(Inkscape) Transparency is used (non-zero) for the text in Inkscape, but the package 'transparent.sty' is not loaded}%
    \renewcommand\transparent[1]{}%
  }%
  \providecommand\rotatebox[2]{#2}%
  \ifx\svgwidth\undefined%
    \setlength{\unitlength}{538.85227742bp}%
    \ifx\svgscale\undefined%
      \relax%
    \else%
      \setlength{\unitlength}{\unitlength * \real{\svgscale}}%
    \fi%
  \else%
    \setlength{\unitlength}{\svgwidth}%
  \fi%
  \global\let\svgwidth\undefined%
  \global\let\svgscale\undefined%
  \makeatother%
  \begin{picture}(1,1.0313037)%
    \put(0,0){\includegraphics[width=\unitlength]{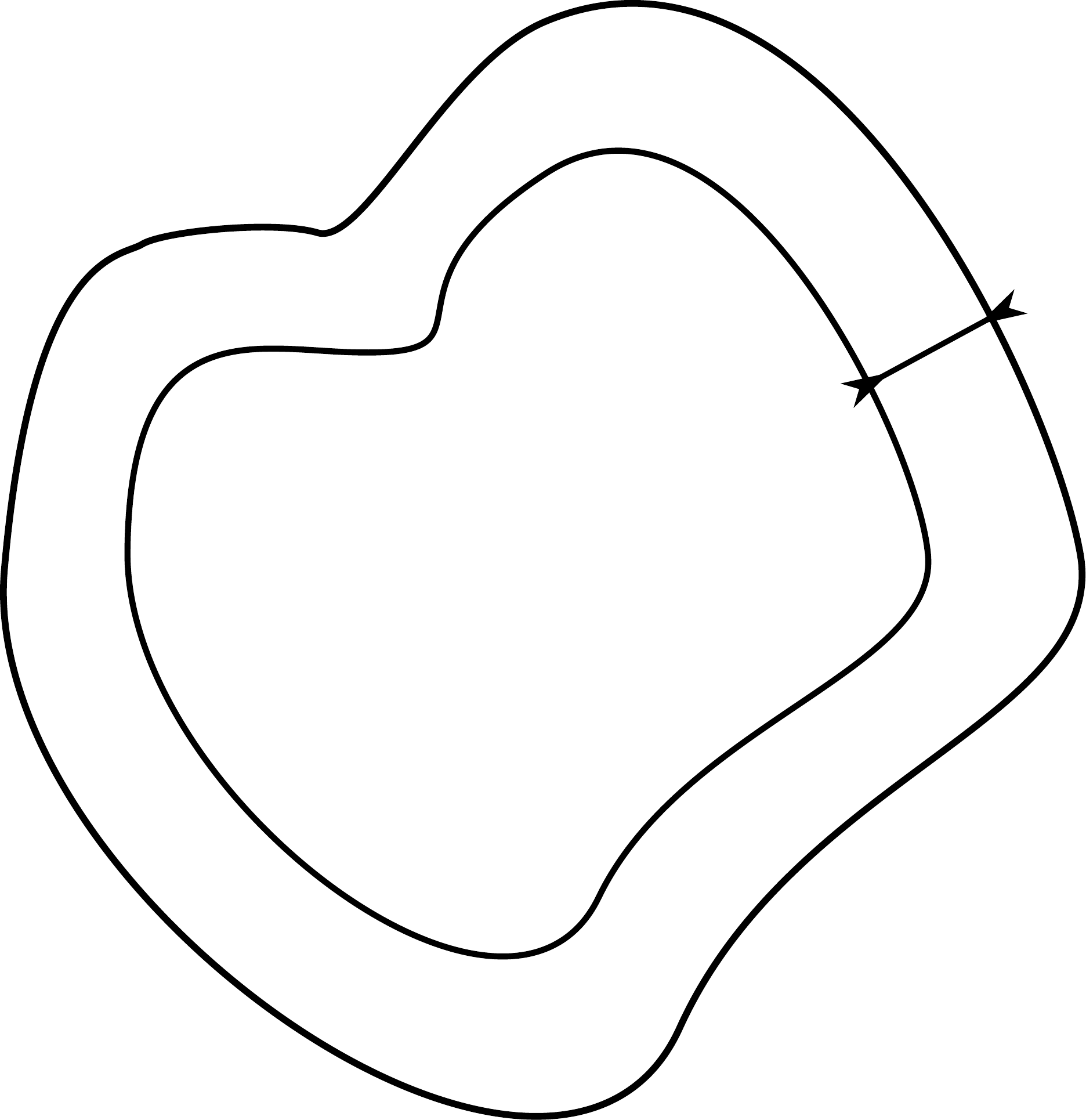}}%
    \put(0.85807032,0.7050196){\color[rgb]{0,0,0}\makebox(0,0)[lt]{\begin{minipage}{0.18243614\unitlength}\raggedright $r_0$\end{minipage}}}%
    \put(0.43382821,0.75816786){\color[rgb]{0,0,0}\makebox(0,0)[lt]{\begin{minipage}{0.12803047\unitlength}\raggedright $F$\end{minipage}}}%
    \put(0.77084013,0.93561981){\color[rgb]{0,0,0}\makebox(0,0)[lt]{\begin{minipage}{0.24468052\unitlength}\raggedright $F^{r_0}$\end{minipage}}}%
  \end{picture}%
\endgroup%

		\captionof{figure}{Behavior of $u_{V}$}
	\end{center}
	We define the function $u_{V}$ supported in $V^{r_{0}}$ by:
	\[
	u_{V}(x) = \left\{
	\begin{array}{lll}
	1\quad						&\mbox{if }& x\in V \\
	1-\frac{1}{r_{0}}d_0(x,V)\quad&\mbox{if }& x\in V^{r_{0}}\backslash V \\
	0							&\mbox{if }& x\in M\setminus V^{r_{0}}
	\end{array}
	\right.
	\]
Here we have:
	\[
	\int_{M}u_{V}^{2}\rho \d V_{g}\geqslant \int_{V}\rho \d V_{g}
	=\mu(V)\geqslant\frac{\mu(M)}{c^{2}(k+1)}
	\]
	and we also use Hölder as in the previous case:
\begin{align*}
\int_{M}|\nabla^{g}u_{V}|^{2}\rho^{\alpha}\d V_{g}
=\int_{V^{r_{0}}}|\nabla^{g}u_{V}|^{2}\rho^{\alpha}\d V_{g}
	&\leqslant \left(\int_{V^{r_{0}}}|\nabla^{g} u_{V}|^{n}\d V_{g}\right)^{\frac{2}{n}}
\left(\int_{V^{r_{0}}}\rho^{\frac{n\alpha}{n-2}}\d V_{g}\right)^{\frac{n-2}{n}}  \\
	&\leqslant \left(\int_{V^{r_{0}}}|\nabla^{g} u_{V}|^{n}\d V_{g}\right)^{\frac{2}{n}}\left(\int_{V^{r_{0}}}\rho \d V_{g}\right)^{\alpha}\left(\int_{V^{r_{0}}}1\d V_{g}\right)^{\frac{n-2}{n}-\alpha}  \\
			&= \left(\int_{V^{r_{0}}}|\nabla^{g_{0}} u_{V}|^{n}\d V_{g_{0}}\right)^{\frac{2}{n}} \mu(V^{r_{0}})^{\alpha}|V^{r_{0}}|_{g}^{\frac{n-2}{n}-\alpha}.
\end{align*}

	Since
	\[
	|\nabla^{g_{0}}u_{V}(x)| = \left\{
	\begin{array}{lll}
	0 \quad &\mbox{if }& x\in V\\
	\frac{1}{r_{0}}\quad &\mbox{if }& x\in V^{r_{0}}\backslash V
	\end{array}
	\right.
	\]
	we get
	\[
	\int_{V^{r_{0}}}|\nabla^{g_{0}} u_{V}|^{n}\d V_{g_{0}}
	=\frac{1}{r_{0}^{n}} |V^{r_{0}}\backslash V|_{g_{0}}
	\leqslant \frac{1}{r_{0}^{n}} |V^{r_{0}}|_{g_{0}},
	\]
	and then
	\[\int_{M}|\nabla^{g}u_{V}|^{2}\rho^{\alpha}\d V_{g}\leqslant \frac{|V^{r_{0}}|_{g_{0}}^{\frac{2}{n}}\mu(V^{r_{0}})^{\alpha}\left|V^{r_{0}}\right|_{g}^{\frac{n-2}{n}-\alpha}}{r_{0}^{2}}.\]

	We conclude that
	\begin{equation}
	\label{luc6}
	R_{(g,\rho,\rho^{\alpha})}(u_{V})\leqslant B_{n}\frac{|V^{r_{0}}|_{g_{0}}^{\frac{2}{n}}\mu(V^{r_{0}})^{\alpha}|V^{r_{0}}|_{g}^{\frac{n-2}{n}-\alpha}}{\mu(M)}(k+1),
	\end{equation}
	where $B_{n}=\frac{c^{2}}{r_{0}^{2}}$ depends only on $n$.
	Thus we were able to bound the two Rayleigh quotients by some quantities. We are going to show that these quantities can actually be bounded in the following way.
	
	We use the next lemma, whose proof is given at the end of this section.
	\begin{lemma}
	\label{lemme_3_mesures}
Let $M$ be a Riemannian manifold and let $\nu_1, \nu_2, \nu_3$ be any measures on $M$. Take a collection of $K$ disjoint open subsets $(U_i)_i$ in $M$. If $K \geqslant 4k+1$ for some $k \in \N^*$, then there exist $k+1$ open subsets in this collection such that they satisfy the three conditions below:
\begin{equation*}
\nu_1(U_i) \leqslant \frac{\nu_1(M)}{k+1}, \ \ \ \ \ \ \ \ \ \ \
\nu_2(U_i) \leqslant \frac{\nu_2(M)}{k+1} \ \ \ \ \ \text{and} \ \ \ \ \ \
\nu_3(U_i) \leqslant \frac{\nu_3(M)}{k+1}.
\end{equation*}
	\end{lemma}

	
	As the $4k+4$ sets $G_{j}$ are disjoint, we deduce there exist $k+1$ sets among them satisfying the following three inequalities:
	\[
	|G_{j}|_{g_{0}}\leqslant \frac{|M|_{g_{0}}}{k+1}, \quad \quad |G_{j}|_{g}\leqslant \frac{|M|_{g}}{k+1}\quad \text{and} \quad\mu(G_{j})\leqslant \frac{\mu(M)}{k+1}.
	\]
	Using the previous estimates of the Rayleigh quotients \eqref{luc7} and \eqref{luc6}, we obtain $k+1$ disjointly supported functions $u_{j}$, $j=1,\dots,k+1$, with Rayleigh quotients satisfying either
	\begin{align*}
R_{(g,\rho,\rho^{\alpha})}(u_{j})
	&\leqslant \left(\tilde{A}_n\right)^{\frac{2}{n}}c^2
	\frac{\mu(G_{j})^{\alpha}|G_{j}|_{g}^{\frac{n-2}{n}-\alpha}}{\mu(M)}(k+1) \\
	&\leqslant \left(\tilde{A}_n\right)^{\frac{2}{n}}c^2
	\frac{\mu(M)^{\alpha}|M|_{g}^{\frac{n-2}{n}-\alpha}}{(k+1)^{\alpha}(k+1)^{\frac{n-2}{n}-\alpha}\mu(M)}(k+1) \\
	&= \left(\tilde{A}_n\right)^{\frac{2}{n}}c^2\left(\frac{k+1}{|M|_{g}}\right)^{\frac{2}{n}}\\
	&\leqslant A_n\left(\frac{k}{|M|_{g}}\right)^{\frac{2}{n}},
	\end{align*}
	or 
	\begin{align*}
R_{(g,\rho,\rho^{\alpha})}(u_{j})
	&\leqslant B_n\frac{|G_{j}|_{g_{0}}^{\frac{2}{n}}\mu(G_{j})^{\alpha}|G_{j}|_{g}^{\frac{n-2}{n}-\alpha}}{\mu(M)}(k+1) \\
	&\leqslant B_{n}\frac{|M|_{g_{0}}^{\frac{2}{n}}\mu(M)^{\alpha}|M|_{g}^{\frac{n-2}{n}-\alpha}}{(k+1)^{\frac{2}{n}}(k+1)^{\alpha}(k+1)^{\frac{n-2}{n}-\alpha}\mu(M)}(k+1) \\
	&= B_{n}\left(\frac{|M|_{g_{0}}}{|M|_{g}}\right)^{\frac{2}{n}}.
	\end{align*}
	Consequently $\lambda_{k}^{g}(\rho,\rho^{\alpha})$ is bounded above and
\begin{equation*}
\lambda^{g}_{k}(\rho,\rho^{\alpha})|M|_{g}^{\frac{2}{n}}\leqslant A_{n}k^{\frac{2}{n}}+B_{n}|M|_{g_{0}}^{\frac{2}{n}}.
\end{equation*}
\end{proof}

\begin{proof}[Proof of Lemma \ref{lemme_3_mesures}.]

First, notice that at most $k$ subsets are such that $\nu_1(U_i)>\frac{\nu_1(M)}{k+1}$. Indeed, assume there exist $k+1$ subsets verifying $\nu_1(U_i) > \frac{\nu_1(M)}{k+1}$. Then the volume for $\nu_1$ of these (disjoint) subsets would be greater than the volume of $M$, which is a contradiction.

Now we know we can work on a collection of $K-k$ sets $U_i$ satisfying $\nu_1(U_i) \leqslant \frac{\nu_1(M)}{k+1}$. We repeat the idea to take $K-2k$ sets from this collection that satisfy $\nu_1(U_i) \leqslant \frac{\nu_1(M)}{k+1}$ and $\nu_2(U_i) \leqslant \frac{\nu_2(M)}{k+1}$. We repeat again, and finally extract from these the $K-3k$ smallest sets for the measure $\nu_3$. As $K\geqslant 4k+1$, we have $K-3k \geqslant k+1$, which finishes the proof.

	\end{proof}

\section{Construction of densities with large \texorpdfstring{$\lambda_{1}$}{Lg}}\ %
 
 Now that we have seen we can bound $\lambda_{k}^g(\rho,\rho^{\alpha})$ for $\alpha \in \left(0,\frac{n-2}{n}\right)$, we are going to show that for $\alpha \in \left(\frac{n-2}{n},1\right)$, the supremum $\lambda_{1,\alpha}^{*}(M,g)$ can be equal to $+\infty$ for a certain type of manifold.
The reader can refer to \cite[Section 5]{colbois_soufi_savo_2015} for the definition of a manifold of revolution.
 
\begin{theorem}
	\label{thm_sup_infini}
	Let $\Omega$ be a manifold of revolution of dimension $n \geqslant 2$.
If $\alpha \in \left(\frac{n-2}{n},1\right)$, then
	\begin{equation*}
	\lambda^{*}_{k,\alpha}(\Omega)=+\infty .
	\end{equation*}
\end{theorem}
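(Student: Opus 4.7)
The plan is to exhibit an explicit one-parameter family of radial, strongly concentrated densities $(\rho_m)_{m\geqslant 1}$ on $\Omega$ for which $\lambda_1(\rho_m,\rho_m^\alpha)\to+\infty$. Writing $\Omega$ in geodesic polar coordinates around a pole, so that $g = \d r^2 + h(r)^2 g_{S^{n-1}}$ on $[0,L]\times S^{n-1}$, I take $\rho_m(r) = c_m e^{-mr^2/2}$ where $c_m>0$ is chosen so that $\int_\Omega \rho_m\,\d V_g = |\Omega|_g$. A Laplace-type expansion of the normalisation integral yields $c_m \asymp m^{n/2}$ as $m\to+\infty$.

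Because $\rho_m$ is radial, the spectrum decomposes via spherical harmonics on $S^{n-1}$. Writing $u = U(r) Y_\ell(\theta)$ with $Y_\ell$ an eigenfunction of $\Delta_{S^{n-1}}$ of eigenvalue $\ell(\ell+n-2)$, the Rayleigh quotient reads
\[
R(u) = \frac{\int_0^L (U')^2 \rho_m^\alpha h^{n-1}\,\d r + \ell(\ell+n-2)\int_0^L U^2 \rho_m^\alpha h^{n-3}\,\d r}{\int_0^L U^2 \rho_m h^{n-1}\,\d r},
\]
and $\lambda_1(\rho_m,\rho_m^\alpha)$ is the minimum, over $\ell\geqslant 0$, of the first (nontrivial when $\ell=0$) radial eigenvalue of each such mode. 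The strategy is then to show that each of these one-dimensional eigenvalue problems produces a first eigenvalue tending to $+\infty$.

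For the non-radial modes ($\ell\geqslant 1$), dropping the positive kinetic term gives $R(u) \geqslant \ell(\ell+n-2)\inf_{r\in (0,L]} \rho_m^{\alpha-1}h^{-2}$. A direct optimisation in $r$, using $h(r)\sim r$ near the pole and the explicit form of $\rho_m$, locates this infimum at some $r_\ast\sim 1/\sqrt{m}$ with value of order $c_m^{\alpha-1}\cdot m$, i.e.\ of order $m^{1+n(\alpha-1)/2}$, which diverges precisely when $\alpha>(n-2)/n$.

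For the radial mode ($\ell=0$), I perform the blow-up $s = \sqrt{m}\, r$. Using $h(r)=r+O(r^3)$ near the pole and the exponential concentration of $\rho_m$ at scale $1/\sqrt{m}$, the rescaled problem converges, after dividing the eigenvalue by $m^{1+n(\alpha-1)/2}$, to a fixed eigenvalue problem on $[0,+\infty)$ with the measures $s^{n-1}e^{-s^2/2}\,\d s$ and $s^{n-1}e^{-\alpha s^2/2}\,\d s$; its first nontrivial eigenvalue $\mu_1>0$ gives $\lambda_1^{(0)}\sim\mu_1\, m^{1+n(\alpha-1)/2}\to+\infty$. The same argument, using the $k$-th eigenvalue of the limit problem, handles $\lambda^\ast_{k,\alpha}$ for every fixed $k$. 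The main technical obstacle is to make the passage to the limit problem on the half-line rigorous: one has to control the errors coming from replacing $h$ by its leading Taylor term near the pole and from truncating the half-line to $[0,\sqrt{m}\,L]$, and, crucially, to do so uniformly over test functions so as to transfer the asymptotic from one Rayleigh quotient to the full min-max characterisation of $\lambda_1$.
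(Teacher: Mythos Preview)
Your approach is plausible but takes a considerably harder road than the paper. The paper's proof bypasses any direct spectral analysis with a one-line monotonicity trick: working first with the \emph{unnormalised} density $\rho_m(x)=e^{-m|x|^2}\leqslant 1$, one has $\rho_m^\alpha\geqslant\rho_m$ since $\alpha<1$, and therefore
\[
\lambda_1(\rho_m,\rho_m^\alpha)\geqslant\lambda_1(\rho_m,\rho_m)\geqslant m
\]
for $m$ large, the last inequality being precisely the Witten--Laplacian estimate of Colbois--El~Soufi--Savo already quoted in the introduction. One then normalises, computing $\lambda_1(\tilde\rho_m,\tilde\rho_m^\alpha)=\lambda_1(\rho_m,\rho_m^\alpha)\left(|\Omega|^{-1}\int_\Omega\rho_m\right)^{1-\alpha}$, and the crude bound $\int_\Omega e^{-m|x|^2}\geqslant e^{-n}m^{-n/2}$ yields $\lambda_1(\tilde\rho_m,\tilde\rho_m^\alpha)\geqslant C\, m^{1-\frac{n}{2}(1-\alpha)}\to+\infty$. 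No separation of variables, no half-line limit problem, and no uniform error control are needed.

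Your non-radial estimate is fine. But the radial mode carries the gap you yourself flag: the convergence of the rescaled problem on $[0,\sqrt{m}\,L]$ to the half-line problem, together with the positivity of the first nontrivial eigenvalue $\mu_1$ of the limiting quotient, are genuine analytical facts still to be proved; one must in particular control test functions whose mass escapes to $s\to+\infty$ and justify that the min--max values, not merely individual Rayleigh quotients, pass to the limit. Your scaling $\lambda_1^{(0)}\sim m^{1+n(\alpha-1)/2}$ matches the paper's exponent, but turning it into a rigorous lower bound is real work. The paper's comparison with the case $\alpha=1$ sidesteps all of this by delegating the radial analysis to the cited reference and handling the $\alpha$-dependence through the elementary inequality $\rho_m^\alpha\geqslant\rho_m$.
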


\begin{proof}
Without loss of generality, we assume $0\in \Omega$.
For all $m\geqslant 1$, we define the radial density function $\rho_{m}$ by
	\[
	\rho_m(x) = e^{-m|x|^2}.
	\]
	As $\rho_m \leqslant 1$ and $\alpha < 1$, we get $\rho_m^{\alpha} \geqslant\rho_m$. Then
	\[
	\lambda_{1}(\rho_m,\rho_m^{\alpha}) \geqslant \lambda_{1}(\rho_m,\rho_m).
	\]
According to A. Savo, A. El Soufi, and B. Colbois (see \cite[Theorem 5.2]{colbois_soufi_savo_2015}), we know that in dimension larger than $2$, there exists an $m_0$ such that for all $m\geqslant m_0$, $\lambda_{1} (\rho_m,\rho_m) \geqslant m$.
	Thus for $m\geqslant m_0$, 
	\begin{equation}
	\label{luc1}
	\lambda_{1}(\rho_m,\rho_m^{\alpha}) \geqslant m.
	\end{equation}
	Let $\tilde{\rho}_{m}=\frac{\rho_{m}|\Omega|}{\int_{\Omega}\rho_{m}\d x}\cdot$ It is clear that $\tilde{\rho}_{m}$ is a continuous bounded function on $\Omega$ with $\int_{\Omega}\tilde{\rho}_{m}\d x=~|\Omega|$.
	Thanks to the variational characterisation \eqref{variationnel}, we get
	\begin{equation}
	\label{luc2}
	\lambda_{1}(\tilde{\rho}_{m},\tilde{\rho}_{m}^{\alpha})
	= \underset{u\in E_{1}}{\inf}
	 \frac{\int_\Omega|\nabla u|^2 \left(\frac{\rho_{m}|\Omega|}{\int_{\Omega}\rho_{m}\d x}\right)^{\alpha} \d x}{\int_\Omega u^2 \left(\frac{\rho_{m}|\Omega|}{\int_{\Omega}\rho_{m}\d x}\right) \d x}
	 =	\lambda_{1}(\rho_m,\rho_m^{\alpha})
	\left(\frac{1}{|\Omega|}\int_{\Omega} \rho_{m}\d x\right)^{1-\alpha} .
	\end{equation}
	Using \eqref{luc1} and \eqref{luc2}, we get
	\[\lambda_{1}(\tilde{\rho}_{m},\tilde{\rho}_{m}^{\alpha})\geqslant m \left(\frac{1}{|\Omega|}\int_{\Omega}\rho_{m}\d x\right)^{1-\alpha}.\]
	It remains to estimate $\left(\int_{\Omega} \rho_m \d x\right)^{1-\alpha}$.
	
\begin{lemma}
\label{minoration_integrale}
	 For $m$ large enough,
	\begin{equation*}
	\int_{\Omega} e^{-m|x|^2}\d x
	\geqslant
	e^{-n} m^{-\frac{n}{2}}.
	\end{equation*}
\end{lemma}
	
\begin{proof}
	As $0$ is in $\Omega$, there exists $L>0$ such that $\Omega$ contains the $n$-square $(-L,L)^n$.
	
	Therefore,
	\begin{equation*}
	\int_{\Omega} e^{-m|x|^2}\d x
	\geqslant
	\left(\int_{-L}^L e^{-m t^2} \d t\right)^n.
	\end{equation*}
	
Notice that $t\mapsto e^{-mt^2}$ is a decreasing function on $(0,L)$. Moreover this function takes the value $e^{-1}$ if $t=m^{-\frac{1}{2}}$. We deduce that
	\begin{equation*}
	\int_{-L}^L e^{-m t^2} \d t \geqslant e^{-1} m^{-\frac{1}{2}},
	\end{equation*}
	and the lemma is proved. One can refer to the figure \ref{figure_minoration_integrale} for a visual intuition.
	
\begin{center}
\def\svgscale{0.7}
\begingroup%
  \makeatletter%
  \providecommand\color[2][]{%
    \errmessage{(Inkscape) Color is used for the text in Inkscape, but the package 'color.sty' is not loaded}%
    \renewcommand\color[2][]{}%
  }%
  \providecommand\transparent[1]{%
    \errmessage{(Inkscape) Transparency is used (non-zero) for the text in Inkscape, but the package 'transparent.sty' is not loaded}%
    \renewcommand\transparent[1]{}%
  }%
  \providecommand\rotatebox[2]{#2}%
  \ifx\svgwidth\undefined%
    \setlength{\unitlength}{419.84559066bp}%
    \ifx\svgscale\undefined%
      \relax%
    \else%
      \setlength{\unitlength}{\unitlength * \real{\svgscale}}%
    \fi%
  \else%
    \setlength{\unitlength}{\svgwidth}%
  \fi%
  \global\let\svgwidth\undefined%
  \global\let\svgscale\undefined%
  \makeatother%
  \begin{picture}(1,0.47025185)%
    \put(0,0){\includegraphics[width=\unitlength,page=1]{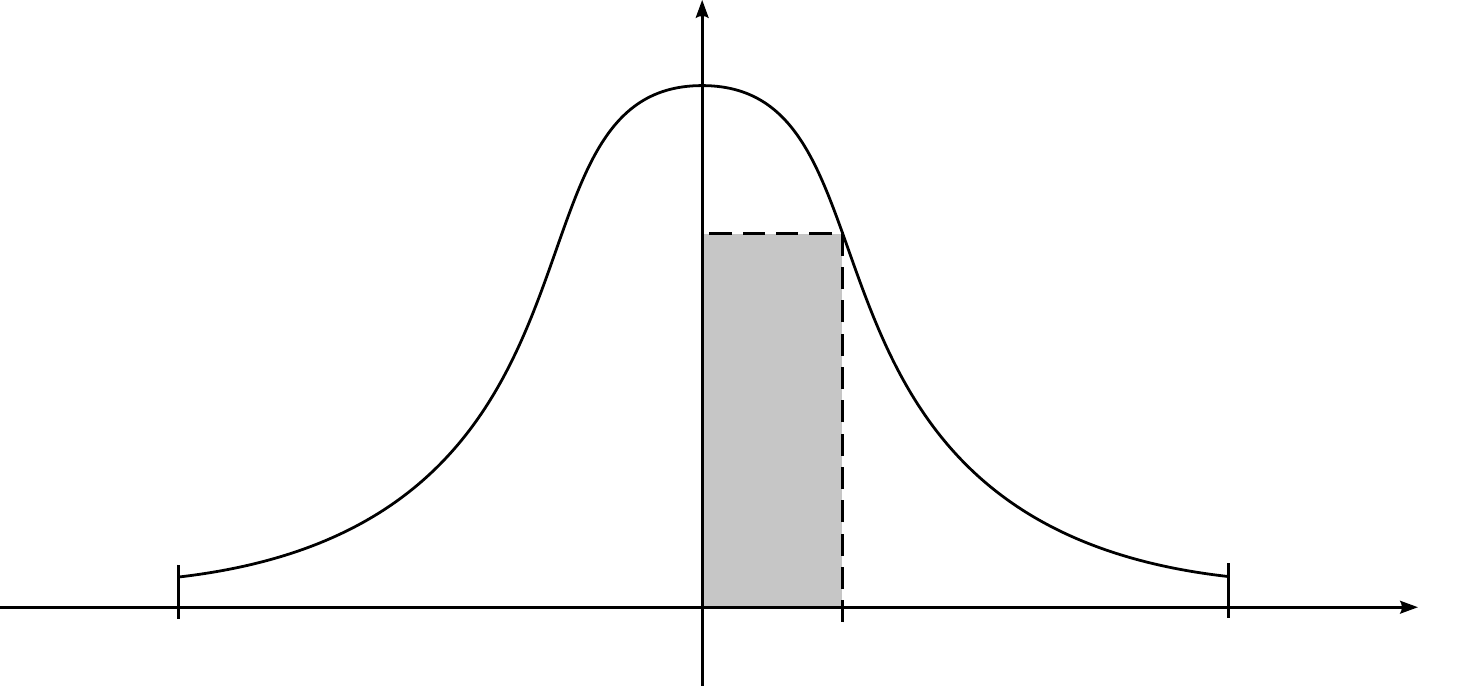}}%
    \put(0.09248671,0.01243718){\color[rgb]{0,0,0}\makebox(0,0)[lb]{\smash{$-L$}}}%
    \put(0.82840464,0.01375544){\color[rgb]{0,0,0}\makebox(0,0)[lb]{\smash{$L$}}}%
    \put(0.53872103,0.00836592){\color[rgb]{0,0,0}\makebox(0,0)[lb]{\smash{$m^{-\frac{1}{2}}$}}}%
    \put(0.6525734,0.16533403){\color[rgb]{0,0,0}\makebox(0,0)[lb]{\smash{$e^{-mt^2}$}}}%
    \put(0.4215002,0.29198638){\color[rgb]{0,0,0}\makebox(0,0)[lb]{\smash{$e^{-1}$}}}%
    \put(0.96381482,0.01779751){\color[rgb]{0,0,0}\makebox(0,0)[lb]{\smash{$t$}}}%
  \end{picture}%
\endgroup%

\captionof{figure}{\label{figure_minoration_integrale}Minoration of the integral by the area of the rectangle}
\end{center}
	
\end{proof}


Thanks to this lemma, we finally obtain
\begin{equation*}
\lambda_{1}(\tilde{\rho}_{m},\tilde{\rho}_{m}^{\alpha})
\geqslant
\frac{e^{-n(1-\alpha)}}{|\Omega|^{1-\alpha}} m^{1-\frac{n}{2}(1-\alpha)}.
\end{equation*}

	Since $\alpha > \frac{n-2}{n}$, then $1-\frac{n}{2}(1-\alpha) > 0$ which means that $\lambda_{1}(\tilde{\rho}_{m},\tilde{\rho}^{\alpha}_{m})\underset{m\to\infty}{\longrightarrow} \infty$ and the proof is complete.
\end{proof}
In the proposition below, we will show that in dimension $1$, the previous result
(Theorem~\ref{thm_sup_infini}) holds true for $\alpha\in (0,1)$.

\begin{proposition}
\label{minoration_dimension_1}

Let us take $M = (-1,1)$. Then for all $\alpha \in (0,1)$,
\[
\lambda_{1,\alpha}^{*}(M) = +\infty.
\]
\end{proposition}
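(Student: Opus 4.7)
The plan is to mirror the proof of Theorem \ref{thm_sup_infini} verbatim, using the Gaussian family $\rho_m(x)=e^{-mx^{2}}$ on $M=(-1,1)$ and its renormalisation $\tilde\rho_m=|M|\rho_m/\int_M\rho_m\,\d x$. The only thing that breaks in dimension $1$ is the cited inequality $\lambda_1(\rho_m,\rho_m)\geqslant m$, since the result of \cite{colbois_soufi_savo_2015} on manifolds of revolution requires $n\geqslant 2$. So the whole task reduces to re-proving that lower bound directly in dimension $1$, which should actually be cleaner there.

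For the skeleton, I would reproduce the three ingredients used in Theorem \ref{thm_sup_infini}. First, since $\rho_m\leqslant 1$ and $\alpha<1$, one has $\rho_m^{\alpha}\geqslant\rho_m$, so $\lambda_1(\rho_m,\rho_m^{\alpha})\geqslant\lambda_1(\rho_m,\rho_m)$. Second, the same scaling computation as \eqref{luc2} gives
\[
\lambda_1(\tilde\rho_m,\tilde\rho_m^{\alpha})=\lambda_1(\rho_m,\rho_m^{\alpha})\left(\tfrac{1}{|M|}\int_M\rho_m\,\d x\right)^{1-\alpha}.
\]
Third, Lemma \ref{minoration_integrale} (whose proof uses only that $0\in\Omega$ and some $(-L,L)\subset\Omega$, which is trivially satisfied here with $L=1$) yields $\int_M\rho_m\,\d x\geqslant e^{-1}m^{-1/2}$ for $m$ large. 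Combining these three facts with the awaited bound $\lambda_1(\rho_m,\rho_m)\geqslant cm$ produces
\[
\lambda_1(\tilde\rho_m,\tilde\rho_m^{\alpha})\gtrsim m\cdot m^{-(1-\alpha)/2}=m^{(1+\alpha)/2}\xrightarrow[m\to\infty]{}+\infty,
\]
the exponent being strictly positive for every $\alpha\in(0,1)$ (indeed for every $\alpha>-1$, consistent with the fact that $\frac{n-2}{n}=-1$ when $n=1$).

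The main obstacle is therefore proving $\lambda_1(\rho_m,\rho_m)\geqslant cm$ in dimension $1$. My preferred route is the Bakry--\'Emery / Brascamp--Lieb inequality: writing $\rho_m=e^{-V}$ with $V(x)=mx^{2}$, one has $V''\equiv 2m>0$ on the convex interval $(-1,1)$, and the Brascamp--Lieb inequality for a strictly log-concave weight on a convex domain gives the weighted Poincar\'e inequality
\[
\int_M\bigl(u-\bar u\bigr)^{2}\rho_m\,\d x\leqslant\frac{1}{2m}\int_M(u')^{2}\rho_m\,\d x,
\]
where $\bar u$ is the $\rho_m$-mean of $u$. This is exactly $\lambda_1(\rho_m,\rho_m)\geqslant 2m$. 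A completely elementary backup, should one prefer to avoid invoking Bakry--\'Emery, is a one-dimensional direct proof: given $f\in H^{1}(-1,1)$ with $\int_{-1}^{1}f\rho_m\,\d x=0$, define $g(x)=\tfrac{1}{\rho_m(x)}\int_{-1}^{x}f\rho_m\,\d t$, which satisfies $g(\pm 1)=0$ and $(\rho_m g)'=f\rho_m$; integrating by parts and using Cauchy--Schwarz together with $V''=2m$ yields the same inequality. Either way, plugging $\lambda_1(\rho_m,\rho_m)\geqslant 2m$ into the chain above completes the proof.
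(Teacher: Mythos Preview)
Your proof is correct, but it follows a genuinely different route from the paper's. The paper does \emph{not} reduce to the Witten Laplacian: it works directly with the $(\rho,\rho^{\alpha})$ equation on $(-1,1)$, differentiates it once to obtain a Sturm--Liouville problem for $y=u'$ with Dirichlet conditions, namely
\[
\bigl(\rho^{2\alpha-1}y'\bigr)'+\Bigl(\lambda-\tfrac{\alpha}{1-\alpha}\bigl(\rho^{\alpha-1}\bigr)''\Bigr)\rho^{\alpha}y=0,
\]
and then \emph{chooses} the density so that the potential term equals $m$ exactly, i.e.\ $\rho_m(x)=C_\alpha(1+mx^{2})^{-1/(1-\alpha)}$; an energy estimate then gives $\lambda_1(\rho_m,\rho_m^{\alpha})\geqslant m$ directly. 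Your approach instead keeps the Gaussian, uses the pointwise inequality $\rho_m^{\alpha}\geqslant\rho_m$ to pass to the Witten problem, and invokes Bakry--\'Emery/Brascamp--Lieb (whose one-dimensional proof is, amusingly, exactly the ``differentiate and set $y=u'$'' trick the paper applies to the $\alpha$-problem). Your route is more conceptual and yields the sharper growth $m^{(1+\alpha)/2}$; the paper's route is fully self-contained and shows how to tailor a density to the $(\rho,\rho^{\alpha})$ operator itself rather than detouring through $\alpha=1$. One caveat: your ``elementary backup'' via $g(x)=\rho_m^{-1}\int_{-1}^{x}f\rho_m$ is too sketchy as written---the step from Cauchy--Schwarz to the Poincar\'e constant $1/(2m)$ is not spelled out---so if you want a self-contained argument, it is cleaner to differentiate the eigenvalue equation for the Witten Laplacian and test against $y=u'$, which gives $\lambda\geqslant\inf V''=2m$ in two lines.
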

\begin{proof} %
	
The above equation 
\begin{equation*}
-\div(\rho^{\alpha} \nabla u)=\lambda\rho u
\end{equation*}
with Neumann boundary conditions on $(-1,1)$ becomes
	\[
	\rho^{\alpha - 1}u'' + \alpha \rho^{\alpha - 2}\rho'u' + \lambda u = 0
	\]
	\[
	\rho^{\alpha - 1}u'' + \frac{\alpha}{\alpha-1}\left(\rho^{\alpha-1}\right)'u' + \lambda u = 0.
	\]
	We can differentiate to obtain
	\begin{equation*}
	\label{equation_premiere}
	\rho^{\alpha-1}u'''
	+\left(\rho^{\alpha-1}\right)'u''
	+\dfrac{\alpha}{\alpha-1}\left(\rho^{\alpha-1}\right)'u''
	+\dfrac{\alpha}{\alpha-1}\left(\rho^{\alpha-1}\right)''u'
	+\lambda u' =0.
	\end{equation*}
	Now we define $y=u'$ and the equation \eqref{luc3} becomes
	\[
	\left\{
	\begin{array}{lllll}
	\rho^{\alpha-1}y''
	&+&\dfrac{2\alpha-1}{\alpha-1}\left(\rho^{\alpha-1}\right)'y'
	&+&\left(\dfrac{\alpha}{\alpha-1}\left(\rho^{\alpha-1}\right)''
	+\lambda\right)y=0 \quad\text{in}\quad(-1,1)\\
	\\
	y(1)&=&\quad y(-1)&=& 0 
	\end{array}
	\right.
	\]
	
	Remark that if we multiply by $\rho^{\alpha}$ we get
	\[
	\rho^{2\alpha-1}y''
	+(2\alpha-1)\rho^{2\alpha-2}\rho'y'
	+\left(\dfrac{\alpha}{\alpha-1}\left(\rho^{\alpha-1}\right)''
	+\lambda\right)\rho^{\alpha}y =0,
	\]
	\textit{i.e.}
	\begin{equation}
	\label{eq_intervalle}
	\left(\rho^{2\alpha-1}y'\right)'
	+\left(\lambda-\dfrac{\alpha}{1-\alpha}\left(\rho^{\alpha-1}\right)''\right)\rho^{\alpha}y =0.
	\end{equation}
Now let $m$ be a positive integer. We want to choose $\rho=\rho_m$ such that $\lambda_1\left(\rho_m, \rho_m^{\alpha}\right) \geqslant m$. To do this we first solve the equation $\frac{\alpha}{1-\alpha}\left(\rho^{\alpha-1}\right)'' = m$ which admits (at least) one positive solution on $(-1,1)$. We choose the density such that
$\rho_m(x)^{\alpha-1}
	= \frac{1-\alpha}{2\alpha}\left(1+mx^2\right)$, that is,
	\[
		\rho_m(x)
		= \left(\frac{2\alpha}{1-\alpha}\right)^{\frac{1}{1-\alpha}}\dfrac{1}{\left(1+mx^2\right)^{\frac{1}{1-\alpha}}}.
	\]
	
	Now we multiply the equation \eqref{eq_intervalle} by $y$ and integrate it by parts to get
	\[
	\int_{-1}^1\left(\rho_m^{2\alpha-1}y'\right)'y \d x
	+\left(\lambda-m\right)\int_{-1}^1\rho_m^{\alpha}y^2\d x = 0,
	\]
	\[
	\left[\rho_m^{2\alpha-1}y'y\right]_{-1}^1 - \int_{-1}^1 \rho_m^{2\alpha-1}(y')^2\d x
	+\left(\lambda-m\right)\int_{-1}^1\rho_m^{\alpha}y^2\d x = 0.
	\]
	But we know that $y=u'$ vanishes at $-1$ and $1$. We obtain the following	
	\[
	\left(\lambda-m\right)\int_{-1}^1\rho_m^{\alpha}y^2 \d x
	= \int_{-1}^1 \rho_m^{2\alpha-1}(y')^2 \d x\geqslant 0.
	\]
	\begin{equation*}
		\text{So } \lambda=\lambda\left(\rho_m,\rho^{\alpha}_m\right) \geqslant m.
	\end{equation*}
	
	Again, we use the idea of Lemma \ref{minoration_integrale} to see that the normalised eigenvalue is not bounded:
	\begin{equation*}
	\lambda_{1}(\tilde{\rho}_{m},\tilde{\rho}_{m}^{\alpha})
	\geqslant
	m\cdot m^{-\frac{1}{2}} = m^{\frac{1}{2}}.
	\end{equation*}

%
%
	
	\bigskip
	The number $m$ being arbitrarily large, this concludes the proof.
\end{proof}

\addcontentsline{toc}{chapter}{Bibliographie}
\bibliographystyle{alpha}
\bibliography{biblio_densities}

\begin{thebibliography}{CESS15}

\bibitem[CES03]{colbois_soufi_2003}
Bruno Colbois and Ahmad El~Soufi.
\newblock Extremal eigenvalues of the {L}aplacian in a conformal class of
  metrics: The `conformal spectrum'.
\newblock {\em Annals of Global Analysis and Geometry}, 24(4):337--349, 2003.

\bibitem[CES19]{colbois_soufi_2019}
Bruno Colbois and Ahmad El~Soufi.
\newblock Spectrum of the {L}aplacian with weights.
\newblock {\em Ann. Global Anal. Geom.}, 55(2):149--180, 2019.

\bibitem[CESS15]{colbois_soufi_savo_2015}
Bruno Colbois, Ahmad El~Soufi, and Alessandro Savo.
\newblock Eigenvalues of the {L}aplacian on a compact manifold with density.
\newblock {\em Communications in Analysis and Geometry}, 23(3):639--670, 2015.

\bibitem[CM08]{colbois_maerten_2008}
Bruno Colbois and Daniel Maerten.
\newblock {Eigenvalues estimate for the Neumann problem of a bounded domain}.
\newblock {\em Journal of Geometric Analysis}, 18(4):1022--1032, 2008.

\bibitem[GYY04]{grigor}
Alexander Grigor'yan, Netrusov Yu, and Shing-Tung Yau.
\newblock Eigenvalues of elliptic operators and geometric applications.
\newblock {\em Surveys in Differential Geometry, IX}, pages 147--218, 2004.

\bibitem[Has11]{hassannezhad_2011}
Asma Hassannezhad.
\newblock Conformal upper bounds for the eigenvalues of the {L}aplacian and
  {S}teklov problem.
\newblock {\em Journal of Functional Analysis}, 261(12):3419--3436, 2011.

\bibitem[Kor93]{korevaar_1993}
Nicholas Korevaar.
\newblock Upper bounds for eigenvalues of conformal metrics.
\newblock {\em J. Differential Geom.}, 37(1):73--93, 1993.

\bibitem[Zhu97]{zhu_1997}
Shunhui Zhu.
\newblock The comparison geometry of {R}icci curvature.
\newblock In {\em Comparison geometry ({B}erkeley, {CA}, 1993--94)}, volume~30
  of {\em Math. Sci. Res. Inst. Publ.}, pages 221--262. Cambridge Univ. Press,
  Cambridge, 1997.

\end{thebibliography}
\label{lastpage}
\end{document}